\definecolor{webgreen}{rgb}{0,.5,0}
\definecolor{webbrown}{rgb}{.6,0,0}
\newcommand{\seqnum}[1]{\href{http://www.research.att.com/cgi-bin/access.cgi/as/~njas/sequences/eisA.cgi?Anum=#1}{\underline{#1}}}
\begin{document}


\begin{center}
\vskip 1cm{\LARGE\bf A Generalization of the Binomial Interpolated \\
\vskip .06in
Operator and its Action on Linear\\
\vskip .1in
Recurrent Sequences}
\vskip 1cm
\large
Stefano Barbero, Umberto Cerruti, and Nadir Murru\\
Department of Mathematics \\
University of Turin \\
via Carlo Alberto 8/10 \\
Turin \\
Italy \\
\href{mailto:stefano.barbero@unito.it}{\tt stefano.barbero@unito.it}\\
\href{mailto:umberto.cerruti@unito.it}{\tt umberto.cerruti@unito.it}\\
\href{mailto:nadir.murru@unito.it}{\tt nadir.murru@unito.it}\\
\end{center}

\theoremstyle{plain}
\newtheorem{theorem}{Theorem}
\newtheorem{corollary}[theorem]{Corollary}
\newtheorem{lemma}[theorem]{Lemma}
\newtheorem{proposition}[theorem]{Proposition}

\theoremstyle{definition}
\newtheorem{definition}[theorem]{Definition}
\newtheorem{example}[theorem]{Example}
\newtheorem{conjecture}[theorem]{Conjecture}

\theoremstyle{remark}
\newtheorem{rema}[theorem]{Remark}

\begin{abstract}  
In this paper we study the action of a generalization of the Binomial
interpolated operator on the set of linear recurrent sequences. We find
how the zeros of characteristic polynomials are changed and we prove
that a subset of these operators form a group, with respect to a
well-defined composition law. Furthermore, we study a vast class of
linear recurrent sequences fixed by these operators and many other
interesting properties. Finally, we apply all the results to integer
sequences, finding many relations and formulas involving Catalan
numbers, Fibonacci numbers, Lucas numbers and triangular numbers.
\end{abstract}

\section{Introduction}
The study of operators acting on sequences is a very rich research field, recently developed with the aid of OEIS \cite{Sloane}. Many beautiful connections between apparently unrelated sequences have been discovered in the last years, investigating the properties of some operators like Binomial, Invert and the Hankel transform (see, e.g., the papers of Layman, \cite{Layman}, Prodinger \cite{Prod}, Spivey and Steil \cite{Spivey} and our previous paper \cite{bcm}). With the aim of continuing this exploration, we give in this paper a deep sight on a generalization of the Binomial interpolated operator.
\begin{definition}
We define, over an integral domain $R$, the set $\mathcal{S}(R)$ of sequences $a=(a_n)_{n=0}^{+\infty}$,  and the set $\mathcal{W}(R)$ of linear recurrent sequences.
\end{definition}
\begin{definition} \label{sigma}
The \emph{right-shift operator} $\sigma$ changes any sequence $a\in\mathcal{S}(R)$ as follows:
\begin{equation*} 
\sigma(a)=(a_1,a_2,a_3,\ldots)\quad.
\end{equation*}
\end{definition}
\begin{definition} \label{L}
We recall the definition of the \emph{Binomial interpolated operator $L^{(y)}$}, with parameter $y\in R$, which acts on elements of $\mathcal{S}(R)$ as follows:
$$L^{(y)}(a)=b, \quad b_n=\sum_{i=0}^n\binom{n}{i}y^{n-i}a_i\ .$$
\end{definition}
Prodinger \cite{Prod} defined a generalization of the Binomial interpolated operator as
\begin{equation} \label{new-L} L^{(h,y)}(a)=b,\quad b_n=\sum_{i=0}^n\binom{n}{i}h^iy^{n-i}a_i\ , \end{equation}
for any $a\in\mathcal{S}(R)$ and $h,y\in R$ not zero. This generalization can arise from the study of particular sequences called \emph{variant sequences} and recently introduced by Gould and Quaintance \cite{Gould}. A variant sequence is a sequence $a \in \mathcal{S}(R)$ satisfying the recurrence
\begin{equation} \label{var-seq} a_{n+1}=\sum_{i=0}^n\binom{n}{i}h^iy^{n-i}a_i\ . \end{equation}
These sequences generalize the Bell numbers \seqnum{A000110}, obtainable from (\ref{var-seq}) when $h=y=1$, and the Uppuluri-Carpenter numbers \seqnum{A000587}, obtainable from (\ref{var-seq}) when $h=1$ and $y=-1$. Furthermore, when $h=-1$ and $y\not=0$ the variant sequences are linear recurrent sequences of degree 2. As example, Gould and Quaintance \cite{Gould} have studied the case $h=-1$ and $y=1$ corresponding to the sequence \seqnum{A010892}:
\begin{equation} \label{v-s-1} (b_n)_{n=0}^{+\infty}=(1,1,0,-1,-1,0,1,1,0,-1,-1,0,1,1,\ldots)\ . \end{equation}
In this way it is easy to prove that
$$b_{n+2}=b_{n+1}-b_n \quad \forall n\geq2\ , $$
and other relations. 
By Definition \ref{L}, it is immediate that if $a\in\mathcal{S}(R)$ is a variant sequence with $h=1$
$$L^{(y)}(a)=\sigma(a),$$
i.e., when $h=1$, the operator $L^{(y)}$ shifts a variant sequence of one position.\\
Now if we want an operator with these characteristics for all the variant sequences, we have to consider the operator (\ref{new-L}). Indeed, for any variant sequence $a\in\mathcal S(R)$
$$L^{(h,y)}(a)=\sigma(a).$$
Furthermore, the operator (\ref{new-L}) generalizes three operators introduced by Spivey and Steil \cite{Spivey} in order to relate together several numbers of  sequences. Spivey and Steil \cite{Spivey} have also used them for a new proof of the invariance of the Hankel transform of a sequence under the action of Binomial. These operators convert a sequence $a\in\mathcal S(R)$ into a sequence $b$ as follows:
$$b_n=k^n\sum_{i=0}^n\binom{n}{i}a_i,\quad b_n=\sum_{i=0}^n\binom{n}{i}k^ia_i,\quad b_n=\sum_{i=0}^n\binom{n}{i}k^{n-i}a_i, $$
for some $k\not=0\in R$, but we can consider these operators respectively as
$$b=L^{(k,k)}(a), \quad b=L^{(k,1)}(a), \quad b=L^{(1,k)}(a)=L^{(k)}(a).$$
Thus the operator (\ref{new-L}) surely provides more informations about sequences over $R$ and, in particular, about integer sequences. Prodinger \cite{Prod} has specially worked on the action of this operator over the exponential generating functions of sequences. In the next section we will focus our attention on linear recurrent sequences and their characteristic polynomials, instead of studying the generating functions. Moreover, we will give a composition law between generalized Binomial interpolated operators, when $h \in R$ is an invertible element, which enables us to obtain a group structure.
\section{Action of generalized Binomial interpolated operator over sequences and composition law}
In a previous paper \cite{bcm} we have shown how the Binomial and Invert interpolated operators act on recurrent sequences, finding how these operators change characteristic polynomials. Following the same way of thinking, we now make clear the action of (\ref{new-L}) over the set $\mathcal{W}(R)$.
\begin{theorem} \label{zeros}
Let $a\in\mathcal{W}(R)$ be a linear recurrent sequence of degree $r$, with characteristic polynomial $f(t)$ having zeros $\alpha_1,\ldots,\alpha_r$, over a field $\mathbb F$ containing $R$. Then $L^{(h,y)}(a)$ is a linear recurrent sequence of degree $r$, with characteristic polynomial $g(t)$ having zeros $h\alpha_1+y,\ldots,h\alpha_r+y$. Moreover if
$$f(t)=t^r+\sum\limits_{i = 1}^r {( - 1)^i } \sigma _i t^{r - i},$$ then 
$$g(t)=t^r+\sum\limits_{i = 1}^r {( - 1)^i } \overline {\sigma _i } t^{r - i},$$
where $\sigma_i$ and $\overline{\sigma_i}$ are the symmetric functions of the roots, satisfying the relations
\begin{equation}\label{sigmar}
\overline {\sigma _i }  = \sum\limits_{k = 0}^i {\binom{r-k}{i-k}} h^i y^{i - k} \sigma _i \quad i=1,2,\ldots,r\ .
\end{equation}
\end{theorem}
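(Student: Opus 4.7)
The plan is to first derive a Binet-type closed form for $b = L^{(h,y)}(a)$, from which the characteristic roots of $b$ are directly visible, and then obtain the identity (\ref{sigmar}) by a combinatorial expansion of the elementary symmetric polynomials in the new roots.

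I work over the splitting field $\mathbb{F}$ of $f$. In the case where $\alpha_1,\ldots,\alpha_r$ are distinct, there exist constants $c_j\in\mathbb{F}$ with $a_n=\sum_{j=1}^r c_j\alpha_j^n$. Substituting into (\ref{new-L}) and exchanging the two summations, the binomial theorem yields
\begin{equation*}
b_n=\sum_{j=1}^r c_j\sum_{i=0}^n\binom{n}{i}(h\alpha_j)^i y^{n-i}=\sum_{j=1}^r c_j(h\alpha_j+y)^n.
\end{equation*}
Hence $b$ is an $\mathbb{F}$-linear combination of $r$ geometric progressions with common ratios $h\alpha_j+y$, so it is linear recurrent of degree $r$ with characteristic polynomial $g(t)=\prod_{j=1}^r\bigl(t-(h\alpha_j+y)\bigr)$. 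To accommodate roots with multiplicity I would replace the ansatz by $a_n=\sum_j P_j(n)\alpha_j^n$ with $\deg P_j$ strictly less than the multiplicity of $\alpha_j$, and verify that $L^{(h,y)}$ sends each summand $n^k\alpha^n$ to a sequence of the form $Q(n)(h\alpha+y)^n$ with $\deg Q\le k$. This reduces to the identity $\sum_{i=0}^n\binom{n}{i}(h\alpha)^i y^{n-i} i^{\underline{s}}=n^{\underline{s}}(h\alpha)^s(h\alpha+y)^{n-s}$, which follows from $i^{\underline{s}}\binom{n}{i}=n^{\underline{s}}\binom{n-s}{i-s}$ and the binomial theorem; together with the change of basis between ordinary and falling powers, this preserves the algebraic multiplicity of each new root.

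Once the roots of $g$ are identified as $h\alpha_j+y$, the identity for $\overline{\sigma_i}$ follows by expanding
\begin{equation*}
\overline{\sigma_i}=\sum_{\substack{S\subseteq\{1,\ldots,r\}\\|S|=i}}\prod_{j\in S}(h\alpha_j+y)
\end{equation*}
and recording, for each factor in the inner product, whether $h\alpha_j$ or $y$ is picked. Grouping the resulting monomials by the size $k$ of the subset $T\subseteq S$ on which $h\alpha_j$ is selected, the sum over $T$ of fixed size $k$ contributes $\sigma_k$, while the number of $i$-subsets $S\supseteq T$ equals $\binom{r-k}{i-k}$, producing exactly the claimed relation (\ref{sigmar}).

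I expect the main technical obstacle to be a clean treatment of the repeated-root case, since the Binet-type decomposition is most transparent when the roots are simple. If the direct binomial manipulation sketched above becomes unwieldy, I would alternatively regard $\alpha_1,\ldots,\alpha_r,h,y$ as formal indeterminates: the claim then amounts to a polynomial identity between the coefficients of $g(t)$ (expressed via the $\sigma_i$) and the entries produced by the action of $L^{(h,y)}$, and polynomial identities that hold on the Zariski-dense open locus of pairwise-distinct roots extend automatically to the general case.
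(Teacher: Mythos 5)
Your proof is correct, but it takes a genuinely different route from the paper's. The paper's proof factors the operator as $L^{(h,y)}=L^{(y)}\circ L^{(h,0)}$: it first passes to the auxiliary sequence $(h^na_n)_{n=0}^{+\infty}$, whose characteristic roots are $h\alpha_j$ and whose symmetric functions are $h^i\sigma_i$, and then cites Theorem 10 and Corollary 11 of the authors' earlier paper \cite{bcm} for the effect of the ordinary Binomial interpolated operator $L^{(y)}$ on the roots (translation by $y$) and on the coefficients. Your argument is self-contained: the Binet-form computation $b_n=\sum_j c_j(h\alpha_j+y)^n$ exhibits the new roots directly, your falling-factorial identity $i^{\underline{s}}\binom{n}{i}=n^{\underline{s}}\binom{n-s}{i-s}$ handles multiple roots explicitly (a case the paper leaves buried in the citation), and your subset expansion of $\prod_{j\in S}(h\alpha_j+y)$ rederives the coefficient relation from scratch instead of composing the two cited formulas. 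A useful by-product of your derivation: you obtain $\overline{\sigma_i}=\sum_{k=0}^i\binom{r-k}{i-k}h^ky^{i-k}\sigma_k$, whereas (\ref{sigmar}) as printed has $h^iy^{i-k}\sigma_i$; your version is the correct one (it specializes for $r=2$ to $\overline{\sigma_1}=h\sigma_1+2y$ and $\overline{\sigma_2}=h^2\sigma_2+hy\sigma_1+y^2$, exactly what the paper uses later in Proposition \ref{fixed-seq}), so the exponent $i$ on $h$ and the subscript $i$ on $\sigma$ in the displayed formula are typographical errors for $k$. The one caveat in your write-up is that the polynomial-times-exponential basis for repeated roots, and hence the exact preservation of multiplicities, is delicate in small positive characteristic; your fallback argument treating $\alpha_1,\ldots,\alpha_r,h,y$ as indeterminates and extending the identity from the locus of simple roots is the cleaner way to dispose of that case.
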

\begin{proof}
In our previous paper we have proved \cite{bcm} (Theorem 10) that $L^{(y)}(a)$ is a linear recurrent sequence of degree $r$, with characteristic polynomial having zeros $\alpha_1+y,\ldots,\alpha_r+y$. Thus, considering the sequence $b=(h^na_n)_{n=0}^{+\infty}$, with characteristic polynomial $p(t)$ having zeros $h\alpha_1,\ldots,h\alpha_r$, we observe that
$$ L^{(h,y)}(a)=L^{(y)}(b) $$
and so $L^{(h,y)}(a)$ is a linear recurrent sequence of degree $r$, with characteristic polynomial having zeros $h\alpha_1+y,\ldots,h\alpha_r+y$.
Furthermore it is clear the relationship between the symmetric functions $\overline{\overline{\sigma_i}}$ and $\sigma_i$ of the roots of $p(t)$ and $f(t)$, respectively
$$\overline{\overline{\sigma_i}}=h^i\sigma_i.$$
To complete the proof, we only need  to observe that $L^{(y)}(p(t))=p(t-y)$ and that the relation (\ref{sigmar}) clearly hold as consequences, respectively, of Theorem 10 and of Corollary 11 of our previous paper \cite{bcm}.
\end{proof}
We now need a well-defined composition law involving operators of the form (\ref{new-L}). This operation, together with the previous Theorem \ref{zeros}, will enable us to prove many relations between sequences. 
In order to compose the operators $L^{(h,y)}$ with each other, we have to define how the operators acts when $h=0$ or $y=0$. So we pose for all $a\in\mathcal{S}(R)$
$$L^{(0,y)}(a)=(y^na_0)_{n=0}^{+\infty}\ , \quad L^{(h,0)}(a)=(h^na_n)_{n=0}^{+\infty}\ ,\quad L^{(0,0)}(a)=(a_0,0,0,0,\ldots)\ .$$
Under these statements, the composition rule naturally derives from the following
\begin{proposition} \label{comp} Given $h,y,k,w \in R$ we have
$$L^{(h,y)}\circ L^{(k,w)}=L^{(hk,y+wh)}$$
\end{proposition}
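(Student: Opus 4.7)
The plan is to verify the identity directly on coefficients, i.e., apply the two operators in succession to an arbitrary $a\in\mathcal{S}(R)$ and compare the resulting $n$-th term with the definition of $L^{(hk,y+wh)}(a)_n$.

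First, I would set $b=L^{(k,w)}(a)$ and $c=L^{(h,y)}(b)$. Substituting the definition of $b_j$ into the formula for $c_n$ gives the double sum
$$c_n=\sum_{j=0}^{n}\binom{n}{j}h^{j}y^{n-j}\sum_{i=0}^{j}\binom{j}{i}k^{i}w^{j-i}a_i.$$
Next, I would swap the order of summation (so that $i$ runs from $0$ to $n$ and $j$ from $i$ to $n$) and apply the classical identity $\binom{n}{j}\binom{j}{i}=\binom{n}{i}\binom{n-i}{j-i}$ in order to pull out the factor $\binom{n}{i}$, which does not depend on $j$. The change of index $m=j-i$ then converts the inner sum into
$$h^{i}\sum_{m=0}^{n-i}\binom{n-i}{m}(hw)^{m}y^{(n-i)-m},$$
which by the binomial theorem equals $h^{i}(y+hw)^{n-i}$. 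Collecting terms yields
$$c_n=\sum_{i=0}^{n}\binom{n}{i}(hk)^{i}(y+wh)^{n-i}a_i,$$
and this is exactly $L^{(hk,y+wh)}(a)_n$, as required.

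Finally, since the main computation implicitly uses the binomial expansion, I would check the boundary cases in which one of $h,y,k,w$ vanishes; here the formula reduces to the ad-hoc definitions of $L^{(0,y)}$, $L^{(h,0)}$ and $L^{(0,0)}$ given just before the proposition, so one must verify by hand that in each of these degenerate situations the resulting composition still agrees with $L^{(hk,y+wh)}$ as newly defined. This case-analysis is not deep, but it is the step most prone to slips since the binomial-theorem manipulation tacitly treats $0^{0}=1$; the main algebraic obstacle in the generic case is simply the bookkeeping in the swap of summations and the recognition of the binomial expansion of $(y+hw)^{n-i}$.
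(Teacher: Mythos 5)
Your proposal is correct and follows essentially the same route as the paper: expand the composition as a double sum, swap the order of summation, use $\binom{n}{j}\binom{j}{i}=\binom{n}{i}\binom{n-i}{j-i}$, and recognize the inner sum as the binomial expansion of $(y+hw)^{n-i}$. Your additional remark about checking the degenerate cases where a parameter vanishes is a sensible precaution that the paper's proof silently omits, but it does not change the substance of the argument.
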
 
\begin{proof}
For all $a\in\mathcal{S}(R)$
$$(L^{(h,y)}(L^{(k,w)}(a)))_n=\sum_{i=0}^n\binom{n}{i}h^iy^{n-i}\sum_{j=0}^i\binom{i}{j}k^jw^{i-j}a_j=$$
$$=\sum_{j=0}^n\sum_{i=j}^n\binom{n}{j}\binom{n-j}{i-j}h^iy^{n-i}k^jw^{i-j}a_j=$$
$$=\sum_{j=0}^n\binom{n}{j}(hk)^ja_j\sum_{i=j}^n\binom{n-j}{i-j}(hw)^{i-j}y^{n-i}= $$
$$\quad\quad= \sum_{j=0}^n\binom{n}{j}(hk)^j(hw+y)^{n-j}a_j=(L^{(hk,y+wh)}(a))_n\ . $$
\end{proof}
Thus, we have that $L^{(1,0)}$ is the identity, with respect to this composition of operators. Moreover, any operator $L^{(h,y)}$ has an inverse, when $h$ is an invertible element of $R$:
$$L^{(h,y)}\circ L^{(1,0)}=L^{(h,y)}, \quad L^{(1,0)}\circ L^{(h,y)}=L^{(h,y)}$$
$$ L^{(h,y)}\circ L^{(\frac{1}{h},-\frac{y}{h})}=L^{(1,0)}, \quad L^{(\frac{1}{h},-\frac{y}{h})}\circ L^{(h,y)}=L^{(1,0)}.$$
These results are a straightforward proof of the next
\begin{proposition}
Let $\mathcal L$ be the set of the operators $L^{(h,y)}$, where $h$ and $y$ belong to $R$, and $h$ is an invertible element. Then $(\mathcal L,\circ)$ is a group.
\end{proposition}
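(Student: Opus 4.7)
The plan is to verify the four group axioms (closure, associativity, identity, inverses) for $(\mathcal{L},\circ)$, leaning almost entirely on Proposition \ref{comp}, which reduces any composition $L^{(h,y)}\circ L^{(k,w)}$ to a single operator $L^{(hk,\,y+wh)}$. This means every axiom becomes a routine algebraic identity in $R$, once the right pair of parameters is identified.

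For closure, I would point out that the units of any integral domain $R$ form a multiplicative group, so if $h$ and $k$ are both invertible then so is $hk$; the composite $L^{(hk,\,y+wh)}$ therefore lies in $\mathcal{L}$. Associativity is inherited for free from associativity of composition of maps $\mathcal{S}(R)\to\mathcal{S}(R)$; if one wishes to avoid appealing to this, it can be checked directly by applying Proposition \ref{comp} twice and observing that both bracketings of $L^{(h,y)}\circ L^{(k,w)}\circ L^{(j,z)}$ collapse to $L^{(hkj,\,y+wh+zkh)}$.

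For the remaining two axioms I would simply cite the displays already written just before the statement. Proposition \ref{comp} gives $L^{(h,y)}\circ L^{(1,0)}=L^{(h\cdot 1,\,y+0\cdot h)}=L^{(h,y)}$ and, symmetrically, $L^{(1,0)}\circ L^{(h,y)}=L^{(h,y)}$, so $L^{(1,0)}$ is a two-sided identity. When $h$ is invertible, the candidate inverse $L^{(1/h,\,-y/h)}$ is well-defined and itself belongs to $\mathcal{L}$; plugging into Proposition \ref{comp} yields $L^{(h,y)}\circ L^{(1/h,\,-y/h)}=L^{(1,\,y+(-y/h)h)}=L^{(1,0)}$, and the reverse composition is analogous.

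There is no real obstacle in this proof; the content has already been digested into Proposition \ref{comp} and the paragraph of verifications that follows it. The only point worth highlighting explicitly is where the invertibility hypothesis on $h$ is actually used: closure uses it trivially (units are closed under products), but the construction of a two-sided inverse genuinely requires $1/h$ to exist in $R$ itself, not merely in a field of fractions, which is precisely why $\mathcal{L}$ is defined with the restriction to invertible $h$.
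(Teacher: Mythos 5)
Your proof is correct and follows essentially the same route as the paper, which likewise derives the identity and inverse laws directly from Proposition \ref{comp} and declares the group structure a straightforward consequence. Your explicit treatment of closure (units of $R$ are closed under multiplication) and associativity is a welcome bit of extra care that the paper leaves implicit, but it does not change the substance of the argument.
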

We now have all the necessary tools to begin a deep exploration of the action of $L^{(h,y)}$ over linear recurrent sequences.  
\section{Fixed sequences and some mutual relations between sequences under the action of generalized Binomial interpolated operator}
We start this section with some interesting results about sequences left unaltered under the action of $L^{(h,y)}$.
First of all we consider a general case, with the only condition that $1-h$ is an invertible element of $R$. In the proof of the following proposition, we will use an umbral approach \`{a} la Gian-Carlo Rota \cite{Rota}. We consider a particular \emph{umbra} $U$ associated to a sequence $a \in \mathcal{S}(R)$, which is a linear functional on $R[z]$ defined by posing 
\begin {equation}\label{udef}
U(z^n)=a_n \quad \forall n=1,2,\ldots \quad  U(1)=a_0 \quad \text{and} \quad U(0)=0,
\end{equation}
 and extended linearly (see Rota \cite{Rota} and Roman \cite{Roman}).
\begin{proposition} \label{umbra}
The sequence
\begin{equation}
a=\left(\left( {\frac{y}{{1 - h}}} \right)^n a_0 \right)_{n=0}^{+\infty},
\end{equation}
is the only sequence satisfying $L^{(h,y)}(a)=a$ for all the possible values of $y \in R$ and $h\in R$ such that $1-h$ is invertible.
\end{proposition}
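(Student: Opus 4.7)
The plan is to prove the proposition in two stages: first checking that the stated geometric sequence really is a fixed point of $L^{(h,y)}$, and then using the umbral reformulation to show that no other sequence can be.

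For the existence half I would simply substitute $a_n = c^n a_0$, with $c = y/(1-h)$, into the defining sum and apply the binomial theorem:
\[
(L^{(h,y)}(a))_n = a_0 \sum_{i=0}^n \binom{n}{i}(hc)^i y^{n-i} = a_0 (hc+y)^n.
\]
The very definition of $c$ ensures $c(1-h) = y$, i.e.\ $hc + y = c$, so the right-hand side collapses to $a_0 c^n = a_n$, confirming $L^{(h,y)}(a) = a$.

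For uniqueness I would pass to the umbra $U$ of (\ref{udef}), so that $U(z^n) = a_n$. The fixed-point condition is then equivalent to $U(z^n) = U((hz+y)^n)$ for every $n \geq 0$, and by linearity extends to $U(P(z)) = U(P(hz+y))$ for every $P \in R[z]$. The key test polynomial is $P(z) = (z-c)^k$: since the affine map $z \mapsto hz+y$ fixes $c$, we have $hz+y-c = h(z-c)$, and consequently $(hz+y-c)^k = h^k(z-c)^k$. Substituting into the umbral identity yields
\[
(1 - h^k)\, U((z-c)^k) = 0 \qquad (k \geq 0).
\]

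The conclusion would then come by induction on $k$. The base case $k = 1$ gives $(1-h) U(z-c) = 0$, and the invertibility of $1-h$ forces $U(z-c) = 0$, i.e.\ $a_1 = c a_0$. For $k \geq 2$, since $R$ is an integral domain, $(1-h^k) U((z-c)^k) = 0$ forces $U((z-c)^k) = 0$, and expanding by the binomial theorem together with the inductive hypothesis $a_j = c^j a_0$ for $j < k$ pins down $a_k = c^k a_0$. The main obstacle in this step is to justify that $1 - h^k$ is not a zero divisor for every $k \geq 1$: this fails only if $h$ is a nontrivial root of unity in $R$, and the natural reading of the hypothesis is that invertibility of $1-h$ is intended to preclude such pathology, so the induction closes cleanly.
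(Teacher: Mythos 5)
Your existence check is fine, and your uniqueness argument is genuinely different from --- and considerably more rigorous than --- what the paper does. The paper's proof only derives the candidate heuristically: it writes the fixed-point condition as $U(z^n-(hz+y)^n)=0$, observes that this ``clearly holds when'' $z^n=(hz+y)^n$, solves the degree-one case for $z=y/(1-h)$, and then evaluates the umbra at that point; at no stage does it show that no other linear functional $U$ (equivalently, no other sequence) satisfies all the identities. Your device of testing against the polynomials $(z-c)^k$, where $c=y/(1-h)$ is the fixed point of $z\mapsto hz+y$, so that $U(P(z))=U(P(hz+y))$ yields $(1-h^k)\,U((z-c)^k)=0$, is exactly the missing uniqueness mechanism, and the induction that converts $U((z-c)^k)=0$ into $a_k=c^ka_0$ is correct.

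However, the obstacle you flag at the end is real and cannot be read out of the hypothesis. Invertibility of $1-h$ does \emph{not} prevent $h$ from being a nontrivial root of unity: take $h=-1$ in any $R$ in which $2$ is invertible. Then $1-h=2$ is a unit, yet $1-h^2=0$, the $n$-th fixed-point equation $(1-h^n)a_n=\sum_{i<n}\binom{n}{i}h^iy^{n-i}a_i$ places no constraint on $a_n$ for even $n$, and uniqueness genuinely fails --- this is precisely the content of Proposition \ref{fixed-seq} and Example \ref{Lucas}, where $L^{(-1,1)}$ fixes the Lucas sequence over $\mathbb{Q}$, which is not of the form $\bigl((y/(1-h))^na_0\bigr)_n$. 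So the statement as literally quantified is false for such $(h,y)$; the honest hypothesis under which your induction (and the proposition) closes is that $h^k\neq 1$ for all $k\geq 1$, i.e., $h$ is not a root of unity (with each $1-h^k$ then a nonzero element of the integral domain, hence not a zero divisor). Your proof is correct under that corrected hypothesis; the defect lies in the proposition's wording (and is silently present in the paper's own argument), not in your method.
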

\begin{proof}
From Definition \ref{new-L} we have $L^{(h,y)}(a)=b$ where, using (\ref{udef})
\begin{equation}
b_n=\sum_{i=0}^n\binom{n}{i}h^iy^{n-i}U(z^i)=U\left(\sum_{i=0}^n\binom{n}{i}h^iy^{n-i}z^i\right)=U\left((hz+y)^n\right).
\end{equation}
The condition $b=a$ corresponds to the equivalent relations
$$U(z^n)=U((hz+y)^n) \quad \forall n=1,2,\ldots, $$
and the linearity of $U$ leads to the equations
$$U(z^n-(hz+y)^n)=0 \quad \forall n=1,2,\ldots, $$
which clearly hold when
$$z^n-(hz+y)^n=0 \quad \forall n=1,2,\ldots .$$
So we immediately obtain 
$$z-(hz+y)=0,$$
and finally $z=\frac{y}{{1 - h}}$, which implies, from (\ref{udef})
$$a_n=U(z^n)=U\left(\left( {\frac{y}{{1 - h}}} \right)^n\right)=\left( {\frac{y}{{1 - h}}} \right)^n U(1)=\left( {\frac{y}{{1 - h}}} \right)^n a_0\ .$$
\end{proof}
\begin{proposition}\label{start1}
Let $a\in \mathcal S(R)$ be a sequence with initial value $a_0=1$, then $L^{(h,y)}(a)=a$ only if $h=1$ and $y=0$ or $h=-1$ and $y=2a_1$.
\end{proposition}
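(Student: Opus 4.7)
The plan is to exploit the fixed-point identity $L^{(h,y)}(a)_n = a_n$ only at the smallest indices $n = 1$ and $n = 2$; once $(h,y)$ is constrained here, we will be done. The $n = 0$ equation is automatic from $a_0 = 1$, so the argument begins at $n = 1$.

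At $n = 1$, the definition of $L^{(h,y)}$ gives $L^{(h,y)}(a)_1 = h a_1 + y a_0 = h a_1 + y$, so the fixed-point condition forces $y = (1-h) a_1$, already expressing $y$ in terms of $h$ and $a_1$. Substituting this into the $n = 2$ expansion $L^{(h,y)}(a)_2 = h^2 a_2 + 2 h y a_1 + y^2 = a_2$ and simplifying, the terms collapse into the single algebraic identity
$$(a_2 - a_1^2)(1-h)(1+h) = 0.$$

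Since $R$ is an integral domain, one of the three factors must vanish. The factor $1-h = 0$ yields $h = 1$ and therefore $y = 0$; the factor $1+h = 0$ yields $h = -1$ and therefore $y = 2 a_1$, recovering exactly the two stated pairs. The remaining possibility $a_2 = a_1^2$ falls into the geometric regime already classified by Proposition~\ref{umbra}, where the unique fixed sequence of $L^{(h,y)}$ (for $1-h$ invertible) is $((y/(1-h))^n)_{n \geq 0} = (a_1^n)_{n \geq 0}$, so no further pairs $(h,y)$ arise beyond the two displayed. The main (mild) obstacle is the routine algebraic simplification at $n = 2$, together with the invocation of the integral-domain hypothesis to pass from the product-zero relation to its three mutually exclusive branches; no deeper machinery is required.
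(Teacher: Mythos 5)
Your computation at $n=1$ and $n=2$ is exactly the paper's argument: the paper writes down the same two equations $a_1h+y=a_1$ and $a_2h^2+y(2a_1h+y)=a_2$ and simply asserts that the system has only the two stated solutions, whereas you actually carry out the elimination and correctly arrive at the factorization $(a_2-a_1^2)(1-h)(1+h)=0$, which over the integral domain $R$ splits into three branches. Up to this point you are fine, and you have in fact made explicit a case the paper silently skips.

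The gap is in how you dispose of the branch $a_2=a_1^2$. Proposition~\ref{umbra} does not say that a sequence with $a_2=a_1^2$ admits no fixing pairs beyond the two listed; it exhibits, for each pair $(h,y)$ with $1-h$ invertible, a geometric sequence that $L^{(h,y)}$ \emph{does} fix. Read correctly it points the opposite way: if $a=(c^n)_{n\ge 0}$ with $c=a_1$, then $L^{(h,y)}(a)=((hc+y)^n)_{n\ge 0}$, so \emph{every} pair with $hc+y=c$ fixes $a$ --- for instance $L^{(2,-1)}$ fixes $(1,1,1,\ldots)$ over $\mathbb{Z}$. So in this branch additional pairs genuinely arise, your conclusion ``no further pairs arise'' is false, and the citation of Proposition~\ref{umbra} cannot close the case; indeed no argument can, since the proposition as stated fails for geometric sequences (the paper's own one-line claim that the system ``has only the solutions $h=1$, $y=0$ and $h=-1$, $y=2a_1$'' has the identical defect). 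A clean fix is to add the hypothesis $a_2\neq a_1^2$ (or, more generally, that $a$ is not the geometric sequence $(a_1^n)_{n\ge 0}$, in which case one must push the analysis to the first index $n$ with $a_n\neq a_1^n$).
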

\begin{proof}
By the definition we know that
$$L^{(h,y)}(1,a_1,a_2,\ldots)=(1,a_1h+y,a_2h^2+y(2a_1h+y),\ldots)$$
and the system 
$$\begin{cases} a_1h+y=a_1 \cr a_2h^2+y(2a_1h+y)=a_2  \end{cases}$$
has only the solutions $h=1, y=0$ and $h=-1,y=2a$.
\end{proof}
\begin{rema}
In a previous work Barbero and Cerruti \cite{bc} studied the underlying group structure involving some operators acting over the set of sequences $a \in \mathcal{S}(R)$ such that $a_0=1$. In particular, embedding this set into $R[[t]]$ as follows:
$$\lambda(a)=\sum\limits_{n = 0}^{ + \infty } {a_n t^{n + 1} }\ .$$
and considering the natural composition of series $\circ$, which induce the operation $\bullet$ 
$$\forall a, b \in \mathcal{S}(R)\quad \text{with} \quad a_0=b_0=1 \quad  a\bullet b=\lambda^{-1}(\lambda(a)\circ \lambda(b))\ ,$$
over this set, they have proved that $L^{(y)}(a)=a\bullet X(y)$, where $X(y)=(y^n)_{n=0}^{+\infty}$. Now if we define, as Barbero and Cerruti  \cite{bc}, the operator $\varepsilon$ such that 
$$\forall a \in \mathcal{S}(R) \quad \varepsilon(a)=((-1)^na_n)_{n=0}^{+\infty}\ ,$$ 
we have, for all sequences $a$ with $a_0=1$
$$L^{(-1,y)}(a)=\varepsilon(a)\bullet X(y)\ , $$
and by Proposition \ref{start1}, when the operator $L^{(-1,2a_1)}$ fixes the sequence $a$, we obtain
$$\varepsilon(a)\bullet X(2a_1)=a \ .$$
\end{rema}
Another important result consists in finding all the recurrent sequences of degree 2  fixed by $L^{(h,y)}$, when we make a suitable choice of $h$, $y$ and initial conditions. First of all, we introduce a notation to shortly write linear recurrent sequences of degree 2.
\begin{definition}
We indicate with $a=(a_n)_{n=0}^{+\infty}=\mathcal W(\delta,\gamma,p ,q)$ the linear recurrent sequence of degree 2 with characteristic polynomial $t^2-pt+q$ and initial conditions $\delta$ and $\gamma$, i.e.,
$$\begin{cases} a_0=\delta; \cr
a_1=\gamma; \cr
a_n=pa_{n-1}-qa_{n-2}\quad \forall n\geq2\ .  \end{cases}$$
\end{definition}
\begin{proposition} \label{fixed-seq}
For any linear recurrent sequence  $a=\mathcal W(\delta,\gamma,p ,q)$, we have $L^{(h,y)}(a)=a$ if and only if $(h,y)=(-1,p)$ and $\gamma=\frac{p}{2} \delta$, counting out the trivial case $(h,y)=(1,0)$. Moreover
\begin{equation}\label{relhy} L^{(h,y)}(a)=L^{(-h,y+ph)}(a). \end{equation}
\end{proposition}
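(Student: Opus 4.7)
The plan is to split the proposition into two independent claims. The first (the iff characterization of fixed points) will be handled by matching characteristic polynomials via Theorem \ref{zeros}, followed by matching initial conditions. The second (equation (\ref{relhy})) will then follow at once from the composition law of Proposition \ref{comp}, applied to sequences $a$ that are fixed by $L^{(-1,p)}$.

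Denote the roots of $t^2 - pt + q$ by $\alpha_1, \alpha_2$, assumed distinct for the moment. For the forward direction of the characterization, suppose $L^{(h,y)}(a) = a$. By Theorem \ref{zeros}, $L^{(h,y)}(a)$ satisfies a recurrence whose characteristic polynomial has roots $h\alpha_1 + y$ and $h\alpha_2 + y$, so matching with the characteristic polynomial of $a$ forces the multiset equality $\{h\alpha_1 + y,\, h\alpha_2 + y\} = \{\alpha_1, \alpha_2\}$. Either each $\alpha_i$ is individually fixed, giving $(h-1)\alpha_1 = (h-1)\alpha_2 = -y$ and hence $h=1$, $y=0$ (the trivial case); or the roots are swapped, in which case adding $h\alpha_1 + y = \alpha_2$ and $h\alpha_2 + y = \alpha_1$ yields $(1-h)p = 2y$, while subtracting them yields $(h+1)(\alpha_1 - \alpha_2) = 0$, forcing $h = -1$ and $y = p$.

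A direct computation from Definition \ref{new-L} gives $(L^{(-1,p)}(a))_0 = \delta$ (automatic) and $(L^{(-1,p)}(a))_1 = p\delta - \gamma$, so matching with $a_1 = \gamma$ requires precisely $\gamma = \frac{p}{2}\delta$. For the converse, if $(h,y) = (-1,p)$ and $\gamma = \frac{p}{2}\delta$, then the substitution $\alpha \mapsto p - \alpha$ merely swaps $\alpha_1$ and $\alpha_2$, so the characteristic polynomial is preserved, the first two terms agree, and the sequences must coincide by uniqueness of solutions to a degree-$2$ recurrence.

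For equation (\ref{relhy}), assume the hypothesis $\gamma = \frac{p}{2}\delta$ so that $L^{(-1,p)}(a) = a$; then Proposition \ref{comp} immediately yields
\[ L^{(h,y)}(a) = L^{(h,y)}\bigl(L^{(-1,p)}(a)\bigr) = \bigl(L^{(h,y)} \circ L^{(-1,p)}\bigr)(a) = L^{(-h,\,y+ph)}(a). \]
The principal obstacle is the degenerate case $q = p^2/4$, where $t^2 - pt + q$ has the double root $p/2$ and the root-permutation argument collapses. There, however, the condition $\gamma = \frac{p}{2}\delta$ forces the geometric sequence $a_n = (p/2)^n \delta$, and Proposition \ref{umbra} with $y/(1-h) = p/2$ confirms that such sequences are fixed by $L^{(-1,p)}$, so (\ref{relhy}) still follows from Proposition \ref{comp}.
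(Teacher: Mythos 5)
Your proof is correct and rests on the same pillars as the paper's: Theorem \ref{zeros} for the characteristic polynomial, a direct check of the first two terms to force $\gamma=\frac{p}{2}\delta$, and Proposition \ref{comp} for (\ref{relhy}). The one substantive difference is how the polynomial condition is exploited. The paper equates the symmetric functions, solves $p=hp+2y$, $q=h^2q+hyp+y^2$, and arrives at the factorization $(1-h)(1+h)\left(q-\frac{p^2}{4}\right)=0$, which it dismisses by demanding validity ``for all $p$ and $q$''; you instead compare the multisets of roots, so the dichotomy ``roots fixed versus roots swapped'' yields $h=\pm1$ at once and makes visible exactly where the argument degenerates, namely the double-root case $q=\frac{p^2}{4}$, which you then settle via Proposition \ref{umbra}. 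You are also more explicit than the paper on two points it leaves tacit: that (\ref{relhy}) is asserted under the hypothesis $\gamma=\frac{p}{2}\delta$ (it follows from $L^{(h,y)}\circ L^{(-1,p)}=L^{(-h,y+ph)}$ only because $L^{(-1,p)}$ fixes $a$), and that the ``only if'' direction genuinely needs nondegeneracy. Indeed, both your argument and the paper's implicitly assume the minimal recurrence of $a$ has degree $2$ with distinct roots: if $a$ happens to be geometric, say $a_n=\alpha^n\delta$ with $\alpha$ a root of $t^2-pt+q$, then every operator $L^{(h,(1-h)\alpha)}$ fixes it, so the literal statement fails there for both proofs; your presentation at least surfaces the obstruction rather than hiding it in the coefficient computation.
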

\begin{proof}
As we have shown in Theorem \ref{zeros}, when we consider $r=2$ in (\ref{sigmar}), the symmetric functions of the roots of $L^{(h,y)}(f(t))$ are 
$$\overline{\sigma_1}=h\sigma_1+2y \quad \text{and} \quad \overline{\sigma_2}=h^2\sigma_2+ hy\sigma_1+y^2.$$
In our case $\sigma_1=p$ and $\sigma_2=q$, so the characteristic polynomial of $L^{(h,y)}(a)$ will coincide with the one of $a$ if and only if 
$$p=hp+2y \quad \text{and}\quad q=h^2q+hyp+y^2.$$
From the first of these equalities we find $y=\frac{1-h}{2}p$. Substituting into the second one, we obtain
$$(1-h)(1+h)(q-\frac{p}{4})=0,$$
which is true for all $p$ and $q$ if and only if $h=-1$ or $h=1$. The case $h=1$ gives $y=0$, not so interesting, while
the case $h=-1$ gives $y=p$. Finally, if we pay attention to the initial conditions, we have 
$$L^{(-1,p)}(a)=(\delta,-\gamma+p\delta,\ldots)$$
so $-\gamma+p\delta=\gamma$ if and only if $\gamma=\frac{p}{2}\delta$. The equality (\ref{relhy}) is a straightforward consequence of Proposition \ref{comp}.
\end{proof}
\begin{example}\label{Lucas}
As an example of sequence like those studied in Proposition \ref{fixed-seq}, we can consider the Lucas numbers \seqnum{A000032} $$l=(l_n)_{n=0}^{+\infty}=(2, 1, 3, 4, 7, 11, 18, 29, 47, 76, 123,\ldots)\ .$$
Since this sequence corresponds to $l=\mathcal{W}(2,1,1,-1)$, we immediately observe that
$$L^{(-1,1)}(l)=l.$$
\end{example}
Let us consider any $a=\mathcal{W}\left(\delta,\frac{p}{2}\delta,p,q\right)$ satisfying the hypotheses of Proposition \ref{fixed-seq}. Applying $L^{(-1,p)}$, we directly obtain the following interesting identities
\begin{equation}\label{fixed-seq-formula1} a_n=\sum_{i=0}^n\binom{n}{i}(-1)^ip^{n-i}a_i\ , \end{equation}
and 
\begin{equation} \label{fixed-seq-formula2}
\sum_{i=0}^{n-1}\binom{n}{i}(-1)^ip^{n-i}a_i=\begin{cases} 0, & \text{if } n
\ \text{even}; \\
2a_n, & \text{if } n \ \text{odd};
\end{cases}\end{equation}
which we will use in the next section, finding some new applications to well-known integer sequences.
Furthermore, as a direct consequence of Proposition \ref{fixed-seq}, we have the following
\begin{corollary} \label{gen-fig-num}
Let $a=(a_n)_{n=0}^{+\infty}$ be a linear recurrent sequence with characteristic polynomial $(t^2-pt+q)^m$. Then $L^{(-1,p)}(a)$ has the same characteristic polynomial.
\end{corollary}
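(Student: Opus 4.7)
The plan is to invoke Theorem~\ref{zeros} directly, tracking how the multiset of roots of the characteristic polynomial transforms under $L^{(-1,p)}$. Since the theorem lists $r$ zeros $\alpha_1,\ldots,\alpha_r$ for a degree-$r$ recurrence, we interpret repeated roots as appearing with multiplicity in that list.

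First I would pass to a splitting field $\mathbb{F}$ of $t^2-pt+q$ and let $\alpha,\beta\in\mathbb{F}$ be its two roots, so that by Vieta $\alpha+\beta=p$ and $\alpha\beta=q$. The characteristic polynomial $(t^2-pt+q)^m$ then factors as $(t-\alpha)^m(t-\beta)^m$, so its list of zeros (with multiplicity) consists of $\alpha$ repeated $m$ times and $\beta$ repeated $m$ times, a total of $r=2m$ zeros matching the degree of the recurrence.

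Next I would apply Theorem~\ref{zeros} with $(h,y)=(-1,p)$: the zeros of the characteristic polynomial of $L^{(-1,p)}(a)$ are $-\alpha_i+p$ for each listed zero $\alpha_i$. Using $\alpha+\beta=p$ we get $-\alpha+p=\beta$ and $-\beta+p=\alpha$, so the new multiset of zeros is again $\alpha$ with multiplicity $m$ and $\beta$ with multiplicity $m$, just with the roles of $\alpha$ and $\beta$ swapped. Consequently the new characteristic polynomial is $(t-\beta)^m(t-\alpha)^m=(t^2-pt+q)^m$, exactly as claimed.

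The only subtle point, and the potential obstacle, is confirming that Theorem~\ref{zeros} respects multiplicities rather than merely describing the set of zeros. This is implicit in its statement, since it asserts that $L^{(h,y)}(a)$ remains of degree $r$ and produces exactly $r$ zeros $h\alpha_i+y$ (the symmetric function formulas (\ref{sigmar}) are also independent of whether the $\alpha_i$ are distinct). Once this is granted, the corollary is immediate and no further computation is required.
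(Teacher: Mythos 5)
Your proposal is correct and matches the paper's intent: the paper gives no written proof, simply asserting the corollary as a direct consequence of Proposition~\ref{fixed-seq}, whose own proof rests on the same root-transformation $\alpha\mapsto h\alpha+y$ from Theorem~\ref{zeros} that you use. Your explicit root-swapping argument ($-\alpha+p=\beta$, $-\beta+p=\alpha$, multiplicities preserved) is exactly the natural fleshing-out of that one-line justification.
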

 
We study another interesting property of the action of $L^{(h,y)}$ on linear recurrent sequences of degree 2 in the next
\begin{theorem} \label{mult}
If $a=\mathcal{W}(\delta,\gamma,p,q)$ and $u=\mathcal{W}(0,1,p,q)$ then
$$b=(a_{kn})_{n=0}^{+\infty}=L^{(u_k,-qu_{k-1})}(a), \quad k\geq 1.$$
\end{theorem}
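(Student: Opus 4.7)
The plan is to pass to a splitting field and use the Binet representation together with Theorem \ref{zeros}. Let $\mathbb{F}$ be a field containing $R$ in which $t^2-pt+q$ factors as $(t-\alpha)(t-\beta)$, so that $\alpha+\beta=p$ and $\alpha\beta=q$. First I would observe that the subsequence $(a_{kn})_{n=0}^{+\infty}$ is itself a linear recurrent sequence of degree $2$, whose characteristic polynomial has roots $\alpha^k$ and $\beta^k$; this is standard, since $a_n = A\alpha^n + B\beta^n$ (or the repeated-root analogue) gives $a_{kn}=A(\alpha^k)^n+B(\beta^k)^n$. Hence $(a_{kn})$ is completely determined by that characteristic polynomial together with its first two values $a_0$ and $a_k$.

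Next I would apply Theorem \ref{zeros} to compute the characteristic polynomial of $L^{(u_k,-qu_{k-1})}(a)$: its roots are $u_k\alpha - qu_{k-1}$ and $u_k\beta - qu_{k-1}$. The key identity to verify is
\begin{equation*}
u_k\alpha - q u_{k-1}=\alpha^k,\qquad u_k\beta - q u_{k-1}=\beta^k.
\end{equation*}
Using $u_n=(\alpha^n-\beta^n)/(\alpha-\beta)$ (when $\alpha\neq\beta$) and $\alpha\beta=q$, a direct computation gives
\begin{equation*}
u_k\alpha - qu_{k-1}=\frac{\alpha(\alpha^k-\beta^k)-\alpha\beta(\alpha^{k-1}-\beta^{k-1})}{\alpha-\beta}=\frac{\alpha^k(\alpha-\beta)}{\alpha-\beta}=\alpha^k,
\end{equation*}
and symmetrically for $\beta^k$. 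The degenerate case $\alpha=\beta$ is handled by $u_n=n\alpha^{n-1}$, which yields the same identity. Consequently $L^{(u_k,-qu_{k-1})}(a)$ and $(a_{kn})$ share the same characteristic polynomial and therefore the same degree-$2$ recurrence.

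It then remains to check that the two sequences agree in their first two terms. The equality $b_0=a_0=\delta$ is immediate from the definition of $L^{(h,y)}$. For $b_1$, by Definition \ref{L} and the conventions adopted in the paper,
\begin{equation*}
b_1 = u_k\gamma - q u_{k-1}\delta.
\end{equation*}
Using Binet's expression $a_n=\frac{(\gamma-\delta\beta)\alpha^n+(\delta\alpha-\gamma)\beta^n}{\alpha-\beta}$, one rearranges
\begin{equation*}
a_k = \gamma\cdot\frac{\alpha^k-\beta^k}{\alpha-\beta}-\delta\alpha\beta\cdot\frac{\alpha^{k-1}-\beta^{k-1}}{\alpha-\beta}=\gamma u_k - \delta q u_{k-1},
\end{equation*}
so $b_1=a_k$. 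Since both sequences satisfy the same second-order recurrence and have the same first two terms, they coincide, completing the proof.

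The only real step that requires care is the key identity $u_k\alpha-qu_{k-1}=\alpha^k$; everything else is bookkeeping via Theorem \ref{zeros} and Binet. Working in the splitting field avoids any concern about whether the roots live in $R$, and the repeated-root case is an easy separate check.
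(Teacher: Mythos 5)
Your proof is correct and follows essentially the same route as the paper: both identify $(a_{kn})$ as a degree-2 recurrence with roots $\alpha^k,\beta^k$, then use Theorem \ref{zeros} and the Binet form of $u$ to verify $u_k\alpha - qu_{k-1}=\alpha^k$ and match initial conditions. You are in fact slightly more thorough than the paper, which establishes the subsequence's recurrence via a matrix power rather than Binet and does not spell out the verification $b_1=a_k$ or the repeated-root case.
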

\begin{proof}
If  $t^2-pt+q$ has zeros $\alpha$ and $\beta$, then $b$ recurs with characteristic polynomial $t^2-v_kt+q^k$ whose zeros are $\alpha^k$ and $\beta^k$, and $v_k$ is the $k$-th term of $v=\mathcal{W}(2,p,p,q)$. Indeed, the characteristic polynomial of $b$ is the characteristic polynomial of the matrix \cite{Cerruti}:
$$\begin{pmatrix} 0 & 1 \cr -q & p  \end{pmatrix}^k.$$
If we take $h=u_k=\frac{\alpha^k-\beta^k}{\alpha-\beta}$ and $y=-qu_{k-1}=\frac{\alpha\beta^k-\alpha^k\beta}{\alpha-\beta}$, by Theorem \ref{zeros} the zeros of the characteristic polynomial of $b$ are 
$$\cfrac{\alpha^k-\beta^k}{\alpha-\beta}\cdot\alpha+\cfrac{\alpha\beta^k-\alpha^k\beta}{\alpha-\beta}=\alpha^k,\quad \cfrac{\alpha^k-\beta^k}{\alpha-\beta}\cdot\beta+\cfrac{\alpha\beta^k-\alpha^k\beta}{\alpha-\beta}=\beta^k$$
and the initial conditions are $a_0$ and $a_k$.
\end{proof}

Finally, we see how the Hankel transform \cite{Layman} changes after applying $L^{(h,y)}.$ 
\begin{proposition}\label{hankel}
Let $H$ be the Hankel transform, for any $a\in\mathcal{S}(R)$
$$H(L^{(h,y)}(a))=(h^{n(n+1)}k_n)_{n=0}^{+\infty}.$$
where $(k_n)_{n=0}^{+\infty}=H(a)$.
\end{proposition}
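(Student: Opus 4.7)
The plan is to reduce the problem to two already-known ingredients by exploiting the composition law of Proposition \ref{comp}. Namely, from $L^{(h,y)}\circ L^{(k,w)}=L^{(hk,\,y+wh)}$ one immediately gets the factorization
\[
L^{(h,y)} \;=\; L^{(1,y)}\circ L^{(h,0)},
\]
since $L^{(1,y)}\circ L^{(h,0)} = L^{(1\cdot h,\,y+0\cdot 1)} = L^{(h,y)}$. Thus $L^{(h,y)}(a)$ is obtained from $a$ by first scaling to $L^{(h,0)}(a)=(h^n a_n)_{n=0}^{+\infty}$ and then applying the classical Binomial interpolated operator $L^{(y)}=L^{(1,y)}$.

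The next step uses the well-known invariance of the Hankel transform under $L^{(y)}$ (the result of Layman \cite{Layman}, also recovered in the authors' previous paper \cite{bcm}), giving
\[
H\bigl(L^{(h,y)}(a)\bigr)\;=\;H\bigl(L^{(1,y)}(L^{(h,0)}(a))\bigr)\;=\;H\bigl(L^{(h,0)}(a)\bigr).
\]
So the proof reduces to computing the Hankel transform of $c=(h^n a_n)_{n=0}^{+\infty}$.

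This last step is a direct determinant calculation: the $(n+1)\times(n+1)$ Hankel matrix of $c$ has entry $h^{i+j}a_{i+j}$ in position $(i,j)$, so factoring $h^i$ out of row $i$ and $h^j$ out of column $j$ yields
\[
\det\bigl(h^{i+j}a_{i+j}\bigr)_{0\le i,j\le n} \;=\; \Bigl(\prod_{i=0}^{n} h^i\Bigr)\Bigl(\prod_{j=0}^{n} h^j\Bigr)\det(a_{i+j})_{0\le i,j\le n} \;=\; h^{n(n+1)}\,k_n,
\]
which is exactly the claimed formula. There is no real obstacle: once the decomposition $L^{(h,y)}=L^{(1,y)}\circ L^{(h,0)}$ is noticed, the proof is a two-line combination of a cited invariance result and an elementary row/column scaling in the determinant.
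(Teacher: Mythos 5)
Your proof is correct, and it follows the same overall strategy as the paper: factor $L^{(h,y)}$ through the composition law of Proposition \ref{comp} into a classical Binomial interpolated operator (under which $H$ is invariant) composed with an operator whose effect on $H$ is known. The difference is in the chosen factorization. The paper writes $L^{(h,y)}=L^{(1,y-1)}\circ L^{(h,1)}$ and then cites Spivey and Steil for the identity $H(L^{(h,1)}(a))=(h^{n(n+1)}k_n)_{n=0}^{+\infty}$, whereas you write $L^{(h,y)}=L^{(1,y)}\circ L^{(h,0)}$ and handle the remaining factor $L^{(h,0)}(a)=(h^na_n)_{n=0}^{+\infty}$ by a direct determinant computation: factoring $h^i$ out of row $i$ and $h^j$ out of column $j$ of the Hankel matrix $(h^{i+j}a_{i+j})_{0\le i,j\le n}$ produces exactly the factor $h^{n(n+1)}$. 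Your version buys self-containedness, since the only external input is the invariance of $H$ under the Binomial interpolated operator, while the scaling step is elementary; the paper's version is shorter on the page but leans on a second cited result. Both arguments are valid, and your row/column scaling is exactly how Spivey and Steil's cited identity is proved anyway.
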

\begin{proof}
$$H(L^{(h,y)}(a))=H(L^{(1,y-1)}(L^{(h,1)}(a)))=H(L^{(h,1)}(a))=(h^{n(n+1)}k_n)_{n=0}^{+\infty}$$
since Spivey and Steil \cite{Spivey} have proved the last equality and the Hankel transform is invariant under the Binomial interpolated operator \cite{Spivey}.
\end{proof}
The last Proposition \ref{hankel} allows us to find the Hankel transform of some sequences of the form $L^{(h,y)}(a)$, starting from a known Hankel transform $H(a)$. For example the Hankel transform of a constant sequence $a=(c,c,c,\ldots)$ is
$$H(a)=(c,0,0,\ldots), \quad \forall c\in R\ ,$$
since 
$$L^{(h,y)}(a)=(c(h+y)^n)_{n=0}^{+\infty}\ ,$$
we have
$$H((c(h+y)^n)_{n=0}^{+\infty})=(c,0,0,\ldots)\ .$$
\section{Applications to integer sequences}
In the previous section we have shown many properties about the action of $L^{(h,y)}$ on linear recurrent sequences over $R$. In this section we focus our attention on integer sequences and we will see how the operator $L^{(h,y)}$ is very useful in order to find many new relations and informations. First of all, we start showing how, by Theorem \ref{zeros}, we can immediately prove a relation between integer sequences only conjectured by Spivey and Steil \cite{Spivey} (p. 11, Table 1).
\begin{proposition}
Let $a=\mathcal W(1,5,6,1)$ and $b=\mathcal{W}(1,3,4,2)$
then
$$L^{(\frac{1}{2},\frac{1}{2})}(a)=b\ .$$
\end{proposition}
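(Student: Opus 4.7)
The plan is to apply Theorem \ref{zeros} directly to the sequence $a = \mathcal{W}(1,5,6,1)$ with the parameters $h = y = 1/2$, and then verify that the resulting sequence agrees with $b = \mathcal{W}(1,3,4,2)$ on enough initial values to conclude equality.

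First I would compute the characteristic polynomial of $L^{(1/2,1/2)}(a)$. The characteristic polynomial of $a$ is $f(t) = t^2 - 6t + 1$, so its roots are $\alpha_{1,2} = 3 \pm 2\sqrt{2}$. By Theorem \ref{zeros}, the characteristic polynomial of $L^{(1/2,1/2)}(a)$ has roots $\tfrac{1}{2}\alpha_i + \tfrac{1}{2}$, that is $2 \pm \sqrt{2}$. These are precisely the roots of $t^2 - 4t + 2$, which is the characteristic polynomial of $b$. Alternatively, one can reach the same conclusion through the symmetric function formulas (\ref{sigmar}): with $\sigma_1 = 6$, $\sigma_2 = 1$, $r=2$ and $h=y=1/2$, the computation $\overline{\sigma_1} = h\sigma_1 + 2y = 4$ and $\overline{\sigma_2} = h^2 \sigma_2 + hy\sigma_1 + y^2 = 2$ gives the same answer.

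Next I would check that the first two terms of $L^{(1/2,1/2)}(a)$ coincide with those of $b$. Writing $c = L^{(1/2,1/2)}(a)$ and using Definition \ref{L} together with (\ref{new-L}), we have $c_0 = a_0 = 1 = b_0$ and $c_1 = \tfrac{1}{2} a_0 + \tfrac{1}{2} a_1 = \tfrac{1}{2} + \tfrac{5}{2} = 3 = b_1$. Since $c$ and $b$ are both linear recurrent sequences of degree $2$ with the same characteristic polynomial and the same first two terms, they must coincide for all $n$.

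The main obstacle is essentially notational: it reduces to the routine verification that $f(t/2 - \text{shift})$ matches the target polynomial and that initial conditions agree; no substantial computation is needed beyond that. Once the theorem is set up, the proof is a one-line check.
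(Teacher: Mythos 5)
Your proposal is correct and follows essentially the same route as the paper: both use Theorem \ref{zeros} to map the roots $3\pm 2\sqrt{2}$ of $t^2-6t+1$ to $2\pm\sqrt{2}$, identify the result with $t^2-4t+2$, and then check the initial conditions (which you do explicitly, while the paper only asserts it is straightforward). No issues.
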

\begin{proof}
The sequence $a$ is the linear recurrent sequence of integers with initial conditions $a_0=1$, $a_1=5$ and characteristic polynomial $t^2-6t+1$ \seqnum{A001653}: 
$$a=(1,5,29,169,985,\ldots)\ .$$
The sequence $b$ is the linear recurrent sequence of integers with initial conditions $b_0=1$, $b_1=3$ and characteristic polynomial $t^2-4t+2$ \seqnum{A007052} in OEIS:
$$b=(1,3,10,34,116,\ldots)\ .$$
The zeros of $t^2-6t+1$ are $3\pm2\sqrt{2}$. The operator $L^{(\frac{1}{2},\frac{1}{2})}$ transforms $a$ into a sequence with characteristic polynomial whose zeros are 
$$\cfrac{1}{2}(3\pm2\sqrt{2})+\cfrac{1}{2}=2\pm\sqrt{2},$$
corresponding to the zeros of $t^2-4t+2$. Finally, it is straightforward to check that $L^{(\frac{1}{2},\frac{1}{2})}$ changes initial conditions of $a$ into those of $b$.
\end{proof}
Many relations involving integer sequences can be found using the Proposition \ref{fixed-seq} and the derived formulas (\ref{fixed-seq-formula1}) and (\ref{fixed-seq-formula2}). For Lucas numbers \seqnum{A000032} we have seen in Example \ref{Lucas} that 
$$L^{(-1,1)}(l)=l\ .$$
We also have the following new relations
$$ \sum_{i=0}^{2k-1}\binom{2k}{i}(-1)^il_i=0, $$
$$ \cfrac{1}{2}\sum_{i=0}^{2k}\binom{2k+1}{i}(-1)^il_i=l_{2k+1}\ . $$
The Proposition \ref{fixed-seq} can be applied to many other interesting recurrent sequences of integers. For example, let $a$ be the sequence of numerators of continued fraction convergents to $\sqrt{2}$ \seqnum{A001333}. This is a linear recurrent sequence of degree 2, in particular, with respect to our notation, it is $a=\mathcal{W}(1,1,2,-1)$, i.e., 
$$(1, 1, 3, 7, 17, 41, 99, 239, 577, 1393,\ldots)\ .$$
Its characteristic polynomial and its initial conditions satisfy the hypotheses of Proposition \ref{fixed-seq} and we have
$$L^{(-1,2)}(a)=a$$
and
$$\sum_{i=0}^{n-1}\binom{n}{i}(-1)^i2^{n-i}a_i=\begin{cases} 0,  &
\text{if } n \ \text{even}; \\
2a_n, & \text{if } n \  \text{odd}. \end{cases}$$
As we have already seen, not only recurrent sequences can be fixed by $L^{(h,y)}$. In the next Theorem we see a surprising result involving the famous Catalan numbers \seqnum{A000108}. They are fixed by $L^{(-1,4)}$.
\begin{theorem}
Let $C=(C_n)_{n=0}^{+\infty}$ be the sequence of the Catalan numbers 
$$C=(1, 1, 2, 5, 14, 42, 132, 429, 1430, 4862, 16796,\ldots)$$
then
$$L^{(-1,4)}(\sigma(C))=\sigma(C).$$
\end{theorem}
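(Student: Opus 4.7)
The plan is to translate the identity $L^{(-1,4)}(\sigma(C)) = \sigma(C)$ into a functional equation on ordinary generating functions, where the algebraic structure of the Catalan numbers reduces everything to a short calculation. Let $C(x) = \sum_{n\geq 0} C_n x^n = (1-\sqrt{1-4x})/(2x)$, and let $A(x) = \sum_{n\geq 0} C_{n+1} x^n$ denote the OGF of $\sigma(C)$. The Catalan functional equation $C(x) = 1 + xC(x)^2$ immediately gives
\[
A(x) = \frac{C(x)-1}{x} = C(x)^2,
\]
which is the key simplification that keeps the proof in closed form.

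Next I would translate the action of $L^{(h,y)}$ into OGF language. Interchanging summation order and using $\sum_{n\geq i}\binom{n}{i} y^{n-i} x^n = x^i/(1-yx)^{i+1}$, one sees that the OGF of $L^{(h,y)}(a)$ equals $\frac{1}{1-yx}\,A(hx/(1-yx))$. Specialising to $(h,y)=(-1,4)$ and substituting $A=C^2$, the theorem reduces to the single functional identity
\[
C(x)^2 \;=\; \frac{1}{1-4x}\, C\!\left(\frac{-x}{1-4x}\right)^{\!2}.
\]

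The final step is to verify this identity. Setting $z = -x/(1-4x)$, a direct computation gives $1-4z = 1/(1-4x)$, hence $\sqrt{1-4z} = 1/\sqrt{1-4x}$ as formal power series (the branches being pinned down uniquely by having constant term $1$). Substituting into the closed form of $C$ and simplifying collapses to $C(z) = \sqrt{1-4x}\,C(x)$; squaring and dividing by $1-4x$ then recovers $C(x)^2$, as required. The only real obstacle is this square-root manipulation, but it is essentially a two-line calculation. As an alternative avoiding square roots, one could use the semicircle integral representation $C_n = \frac{1}{2\pi}\int_0^4 x^n \sqrt{(4-x)/x}\,dx$; both $\sigma(C)_n$ and $L^{(-1,4)}(\sigma(C))_n$ then become integrals on $[0,4]$ whose integrands coincide after the involution $x\mapsto 4-x$, giving the identity instantly.
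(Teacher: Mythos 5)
Your proof is correct, and it takes a genuinely different route from the paper. The paper works coefficientwise: it writes $a_n=\sum_h\binom{n}{h}(-1)^h4^{n-h}C_{h+1}$, produces a WZ-style certificate $R(h,n)=4h(h+2)/(n+1-h)$ so that the summand telescopes, deduces that $a$ satisfies the same first-order recurrence $2(2n+3)a_n=(n+3)a_{n+1}$ as $\sigma(C)$, and matches initial values. Your argument instead passes to ordinary generating functions: the transfer formula $\sum_n (L^{(h,y)}a)_n x^n=\frac{1}{1-yx}A\bigl(\frac{hx}{1-yx}\bigr)$ is standard and correctly derived, the simplification $A(x)=C(x)^2$ via $C=1+xC^2$ is right, and the key computation $1-4z=1/(1-4x)$ for $z=-x/(1-4x)$, giving $C(z)=\sqrt{1-4x}\,C(x)$ and hence $\frac{1}{1-4x}C(z)^2=C(x)^2$, is exactly what is needed; since this is an identity of formal power series, no separate check of initial conditions is required. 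Your approach has the advantage of explaining \emph{why} the pair $(h,y)=(-1,4)$ works (the M\"obius substitution $x\mapsto -x/(1-4x)$ interacts with $\sqrt{1-4x}$ in just the right way), whereas the paper's telescoping argument is more elementary but essentially a verification. Your alternative via the moment representation $C_{n+1}=\frac{1}{2\pi}\int_0^4 x^n\sqrt{x(4-x)}\,dx$ is also valid and arguably the most conceptual of the three: $L^{(-1,4)}$ acts on the moment variable as the reflection $x\mapsto 4-x$, under which the weight $\sqrt{x(4-x)}$ on $[0,4]$ is invariant. The only points to make fully rigorous in a write-up are the formal justification of the coefficient interchange in the transfer formula (harmless, since each $x^n$ receives finitely many contributions) and the uniqueness of the square root with constant term $1$, which you already note.
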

\begin{proof}
Recalling the explicit formula of Catalan numbers, we have
$$C_n=\binom{2n}{n}\cfrac{1}{n+1},\quad \forall n\geq0 \ .$$
Now we pose
$$L^{(-1,4)}(\sigma(C))=a\ ,$$
and we have
$$a_n=\sum_{h=0}^n\binom{n}{h}(-1)^h4^{n-h}\binom{2h+2}{h+1}\cfrac{1}{h+2}=\sum_{h=0}^nF(h,n),\quad \forall n\geq0\ .$$
We can prove that
\begin{equation} \label{fr}2(2n+3)F(h,n)+(-n-3)F(h,n+1)=\Delta(F(h,n)R(h,n)),\end{equation}
for all $n\geq0$, where $R(h,n)=\cfrac{4h(h+2)}{-h+n+1}$ and $\Delta$ is the forward difference operator in $h$. Indeed, dividing the last equation by $F(h,n)$ we obtain for the first member
$$2(2n+3)+4(-n-3)\cfrac{n+1}{n+1-h}=-\cfrac{6+6h+6n+4hn}{n+1-h}$$
and for the second member
$$\cfrac{F(h+1,n)R(h+1,n)}{F(h,n)}-R(h,n)=-2(2h+3)-\cfrac{4h(h+2)}{-h+n+1}=$$ $$=-\cfrac{6+6h+6n+4hn}{n+1-h}.$$
Now summing both members of the equation (\ref{fr}) over $h$ , from $0$ to $n-1$, and rearranging the resulting terms with a little bit of calculation, it is easy to see that we have
$$2(2n+3)a_n+(-n-3)a_{n+1}=0,\quad \forall n\geq0 \ ,$$
and it is well-known (see, e.g., in OEIS \cite{Sloane} the sequence \seqnum{A000108}) that 
$$2(2n+3)C_{n+1}+(-n-3)C_{n+2}=0, \quad \forall n\geq0\ ,$$
i.e., we have proved that $a=\sigma(C)$.
\end{proof}
As consequence we have a new recurrent formula for Catalan numbers:
$$C_{n+1}=\sum_{h=0}^n\binom{n}{h}(-1)^h4^{n-h}C_{h+1}\ ,$$
which we can decompose into two new formulas, like in equation (\ref{fixed-seq-formula2})
$$C_n=\cfrac{1}{4n}\sum_{h=0}^{n-2}\binom{n}{h}(-1)^h4^{n-h}C_{h+1},\quad n \ \hbox{even}$$
$$C_{n+1}=\cfrac{1}{2}\sum_{h=0}^{n-1}\binom{n}{h}(-1)^h4^{n-h}C_{h+1},\quad n \ \hbox{odd}\ .$$
It is really interesting to observe that when, e.g., $n=2k$ in the first formula, we have only to choose $n=2k-1$ in the second formula in order to have two different new representations of the same Catalan number $C_{2k}$.
In the next Propositions, where we find a beautiful formula for triangular numbers \seqnum{A000217} and for the binomial $\binom{n}{4}$, we apply the Corollary \ref{gen-fig-num}.
\begin{proposition}
Let $T=(T_n)_{n=0}^{+\infty}$ be the triangular numbers 
$$(0,1,3,6,10,15,\ldots)\ ,$$
then
$$T_n=\sum_{i=0}^n\binom{n}{i}(-1)^i2^{n-i}T_i\ .$$
\end{proposition}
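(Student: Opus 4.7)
The plan is to invoke Corollary \ref{gen-fig-num}. First I would observe that $T_n = \frac{n(n+1)}{2}$ is a polynomial of degree $2$ in $n$, so the minimal annihilating polynomial of $T$ is $(t-1)^3$; in particular $T$ is also annihilated by $(t-1)^4 = (t^2 - 2t + 1)^2$. This last polynomial has precisely the shape $(t^2 - pt + q)^m$ required by Corollary \ref{gen-fig-num}, with $p = 2$, $q = 1$, and $m = 2$. Applying the corollary with these parameters shows that $L^{(-1,2)}(T)$ has the same characteristic polynomial, so both $T$ and $L^{(-1,2)}(T)$ satisfy the same linear recurrence of order $4$, and both are polynomial sequences of degree at most $3$ in $n$.

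From this structural reduction, to conclude that $L^{(-1,2)}(T) = T$ as sequences, it is enough to verify agreement on a number of initial indices equal to the order of the common recurrence. I would therefore compute the binomial sum $\sum_{i=0}^n \binom{n}{i}(-1)^i 2^{n-i} T_i$ explicitly for $n = 0, 1, 2, 3$ and match each value against $T_0, T_1, T_2, T_3 = 0, 1, 3, 6$. Once the first four terms match, the preservation of characteristic polynomial via Corollary \ref{gen-fig-num} propagates the equality to every $n$.

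A parallel route would be the umbral method developed in the proof of Proposition \ref{umbra}: introduce the umbra $U$ with $U(z^k) = T_k$, rewrite the defining sum as $L^{(-1,2)}(T)_n = U\bigl((2-z)^n\bigr)$, and reduce the claim to the umbral identity $U\bigl((2-z)^n - z^n\bigr) = 0$ for all $n$. Because $T_k$ is a quadratic polynomial in $k$, only the three moments $U(1), U(z), U(z^2)$ enter nontrivially, so the umbral identity would follow from a short direct computation using $T_0 = 0$, $T_1 = 1$, $T_2 = 3$.

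The main obstacle I anticipate is selecting the right framing for Corollary \ref{gen-fig-num}: the minimal characteristic polynomial $(t-1)^3$ is \emph{not} itself of the form $(t^2 - pt + q)^m$, so one must have the foresight to pass to the non-minimal annihilator $(t-1)^4 = (t^2 - 2t + 1)^2$ in order to put $m = 2$ and bring the corollary to bear. Everything after that is a finite matching of initial values, with no further conceptual work required.
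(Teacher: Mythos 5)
Your skeleton---use Corollary \ref{gen-fig-num} to see that $T$ and $L^{(-1,2)}(T)$ satisfy the same order-$4$ recurrence with characteristic polynomial $(t-1)^4=(t^2-2t+1)^2$, then match four initial terms---is logically sound and is essentially the paper's strategy. But the step you defer to a routine computation, ``compute the binomial sum for $n=0,1,2,3$ and match,'' fails at the very first nontrivial index: for $n=1$ the sum is $\binom{1}{0}\cdot 2\,T_0-\binom{1}{1}T_1=-1\neq 1=T_1$, and for $n=2$ it gives $4T_0-4T_1+T_2=-1\neq 3$. In other words $L^{(-1,2)}(T)\neq T$: the corollary only preserves the characteristic polynomial, and for $T$ itself the initial conditions are \emph{not} preserved. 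The same computation kills your umbral variant, since $U(2-z)=2T_0-T_1=-1\neq U(z)$.

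The idea your proposal is missing, and which the paper uses, is to apply the argument not to $T$ but to the prepended sequence $T'=(0,0,1,3,6,10,\ldots)$, so that $T=\sigma(T')$. This $T'$ also recurs with $(t-1)^4$, and now the first four values \emph{are} fixed by $L^{(-1,2)}$: fixedness of the initial block of $L^{(-1,p)}$ forces $a_1=pa_0/2$ and $a_3=(6pa_2-p^3a_0)/4$, which hold with $p=2$ because $T'_0=T'_1=0$, $T'_2=1$, $T'_3=3$. Hence $L^{(-1,2)}(T')=T'$. The price is an index shift in the resulting identity: what one actually obtains is $T'_n=\sum_{i=0}^n\binom{n}{i}(-1)^i2^{n-i}T'_i$, equivalently $T_{n-1}=\sum_{i=1}^{n}\binom{n}{i}(-1)^i2^{n-i}T_{i-1}$; the displayed formula of the Proposition, read literally with $T_n=n(n+1)/2$, is off by exactly this shift (as the $n=1$ check above already shows). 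So the gap is not the choice of $(t-1)^4$ over $(t-1)^3$, which you flag as the main obstacle, but the passage to the shifted sequence $T'$; without it the finite verification your plan relies on does not succeed.
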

\begin{proof}
Let $T'=(T'_n)_{n=0}^{+\infty}$ be the sequence
$$(0,0,1,3,6,10,15,\ldots)\ ,$$
i.e., it is the sequence of triangular numbers starting from $(0,0,1,\ldots)$ instead of  $(0,1,\ldots)$. It is easy to check that $T'$ recurs with characteristic polynomial $(t-1)^4=(t^2-2t+1)^2$. Thus, by Corollary \ref{gen-fig-num}, the sequence $L^{(-1,2)}(T')$ has the same characteristic polynomial. Furthermore, in general
$$L^{(-1,p)}(a_0,a_1,a_2,a_3,\ldots)=(a_0,\cfrac{a_0p}{2},a_2,\cfrac{6a_2p-a_0p^3}{4},\ldots)\ ,$$
i.e., in our case the initial conditions of $T'$ are fixed by $L^{(-1,2)}$, and clearly
$$L^{(-1,2)}(T')=T'\ .$$
If we observe that $T=\sigma(T')$ and $T'_0=0$, recalling (\ref{new-L}), we have
$$T_n=\sum_{i=0}^n\binom{n}{i}(-1)^i2^{n-i}T_i\ .$$
\end{proof}
\begin{proposition}
For any integer $n\geq0$ we have
$$\binom{n}{4}=\sum_{i=0}^n\binom{n}{i}\binom{i}{4}(-1)^i2^{n-i}.$$
\end{proposition}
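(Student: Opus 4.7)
The plan is to mimic the proof of the preceding proposition on triangular numbers and invoke Corollary \ref{gen-fig-num}. First I observe that
\[
\binom{n}{4} = \frac{n(n-1)(n-2)(n-3)}{24}
\]
is a polynomial in $n$ of degree $4$, so the sequence $a=\bigl(\binom{n}{4}\bigr)_{n=0}^{+\infty}$ satisfies the linear recurrence with characteristic polynomial $(t-1)^{k}$ for every $k\ge 5$. The crucial (and only nontrivial) observation is to choose $k=6$, so that this characteristic polynomial rewrites as $(t^2-2t+1)^3$, i.e.\ in the form $(t^2-pt+q)^{m}$ required by the hypothesis of Corollary \ref{gen-fig-num}, with $p=2$, $q=1$, $m=3$.

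With this choice, Corollary \ref{gen-fig-num} immediately yields that $L^{(-1,2)}(a)$ has the same characteristic polynomial $(t-1)^6$, and hence satisfies the same homogeneous linear recurrence of order $6$ as $a$. Two solutions of such a recurrence coincide as soon as their first six values agree, so the proof reduces to verifying $L^{(-1,2)}(a)_n=\binom{n}{4}$ for $n=0,1,2,3,4,5$.

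The remaining verification is mechanical: since $\binom{i}{4}$ vanishes for $i\le 3$, in the defining sum $\sum_{i=0}^{n}\binom{n}{i}(-1)^{i}2^{n-i}\binom{i}{4}$ only the contributions with $i\in\{4,5\}$ are nonzero for $n\le 5$, and matching the first six values becomes a short arithmetic check. The main obstacle is essentially the parity trick of promoting $(t-1)^5$ to $(t-1)^6$ so that the exponent becomes even and Corollary \ref{gen-fig-num} applies; once this is in place, the rest is routine.
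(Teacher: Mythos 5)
Your proposal is correct and follows essentially the same route as the paper: identify $\binom{n}{4}$ as a sequence recurring with characteristic polynomial $(t-1)^6=(t^2-2t+1)^3$, apply Corollary \ref{gen-fig-num} with $p=2$, $q=1$, and confirm the six initial values are fixed by $L^{(-1,2)}$ (your observation that only $i\in\{4,5\}$ contribute makes that check transparent). Your explicit remark that the exponent must be raised from $5$ to $6$ so that the polynomial takes the form $(t^2-pt+q)^m$ is a point the paper leaves implicit, but it is the same argument.
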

\begin{proof}
Let $a$ be the linear recurrent sequence with characteristic polynomial $(t-1)^6$ and initial conditions $(0,0,0,0,1,5)$. It is possible to verify that 
$$a_n=\binom{n}{4},\quad \forall n\geq0\,$$
i.e., $a$ is the sequence \seqnum{A000332}. By Corollary \ref{gen-fig-num} and checking that initial condition are fixed by $L^{(-1,2)}$, we have
$$L^{(-1,2)}(a)=a\ ,$$
and we can explicitly write
$$\binom{n}{4}=\sum_{i=0}^n\binom{n}{i}\binom{i}{4}(-1)^i2^{n-i}.$$
\end{proof}
Finally, we see some applications of the Theorem \ref{mult} to the Fibonacci numbers \seqnum{A000045} $F=(F_n)_{n=0}^{+\infty}=\mathcal{W}(0,1,1,-1)$. Applying directly this theorem, we have
$$(F_{kn})_{n=0}^{+\infty}=L^{(F_k,F_{k-1})}(F)\ ,$$
and we find the following known formula \cite{Sloane}:
$$F_{kn}=\sum_{i=0}^n\binom{n}{i}F_k^iF_{k-1}^{n-i}F_i\ .$$
But if we apply the Theorem \ref{mult} to the Lucas numbers $l$, we have the following relation between Fibonacci and Lucas numbers:
$$(l_{kn})_{n=0}^{+\infty}=L^{(F_k,F_{k-1})}(l)\ ,$$
which gives
$$l_{kn}=\sum_{i=0}^n\binom{n}{i}F_k^iF_{k-1}^{n-i}l_i\ .$$

\section{Final remarks}
In this article we focused on the study of sequences fixed by generalized Binomial interpolated operator. In particular, we have seen this for linear recurrent sequences of degree 2 and this result could be extended to linear recurrent sequences of higher degree. However, as we have already seen, there are fixed points of $L^{(h,y)}$ which are not linear recurrent sequences. A future development of this article is the general study of this problem. It would seem that, except for the particular case studied in Proposition \ref{umbra}, $L^{(h,y)}$ fixes sequences only when $h=1$ (and consequently $y=0$, i.e., we have the identity) or when $h=-1$.

\bigskip
\hrule
\bigskip
\noindent 2000 {\it Mathematics Subject Classification}: Primary 11B37;
Secondary 11B39.

\noindent \emph{Keywords:}  binomial operator, Catalan numbers,
Fibonacci numbers, Lucas numbers, 
triangular numbers, recurrent sequences.

\bigskip
\hrule
\bigskip

\noindent (Concerned with sequences 
\seqnum{A000032},
\seqnum{A000045},
\seqnum{A000108},
\seqnum{A000110},
\seqnum{A000217},
\seqnum{A000332},
\seqnum{A000587},
\seqnum{A001333},
\seqnum{A001653},
\seqnum{A007052}, and
\seqnum{A010892}.)

\bigskip
\hrule
\bigskip

\vspace*{+.1in}
\noindent
Received July 30 2010;
revised version received   December 6 2010.
Published in {\it Journal of Integer Sequences}, December 8 2010.

\bigskip
\hrule
\bigskip

\noindent
Return to
\htmladdnormallink{Journal of Integer Sequences home page}{http://www.cs.uwaterloo.ca/journals/JIS/}.
\vskip .1in

\end{document}